\documentclass[12pt]{amsart}
\usepackage{amssymb}
\usepackage[margin=1in]{geometry}

\usepackage{multirow}

\title{Constructions of exotic group $C$*-algebras}
\author{Matthew Wiersma}
\address{Department of Pure Mathematics, University of Waterloo, Waterloo, ON, Canada N2L 3G1}
\email{mwiersma@uwaterloo.ca}

\subjclass[2010]{Primary 22D25, 22D10; Secondary 43A35, 46L05}
\keywords{group $C$*-algebras, unitary representations of groups, amenability}

\newtheorem{theorem}{Theorem}[section]

\newtheorem{prop}[theorem]{Proposition}

\newtheorem{proposition}[theorem]{Proposition}

\theoremstyle{remark}
\newtheorem{remark}[theorem]{Remark}

\theoremstyle{definition}
\newtheorem{defn}[theorem]{Definition}
\newtheorem{example}[theorem]{Example}

\newcommand{\fn}{\!:}
\newcommand{\C}{\mathbb C}
\newcommand{\R}{{\mathbb R}}

\newcommand{\A}{\mathcal A}

\newcommand{\Hi}{\mathcal{H}}
\newcommand{\lla}{\left\langle}
\newcommand{\rra}{\right\rangle}
\newcommand{\mc}{\mathcal}

\newcommand{\tn}{\textnormal}

\newcommand{\Z}{\mathbb Z}

\newcommand{\F}{\mathbb F}

\begin{document}

\begin{abstract}
Let $\Gamma$ be a discrete group. When $\Gamma$ is nonamenable, the reduced and full group $C$*-algebras differ and it is generally believed that there should be many intermediate $C$*-algebras, however few examples are known. In this paper we give new constructions and compare existing constructions of intermediate group $C$*-algebras for both generic and specific groups $\Gamma$.
\end{abstract}

\maketitle

\section{Introduction}

Let $G$ be a locally compact group. Then $G$ is amenable if and only if $C^*(G)$ and $C^*_r(G)$, the full and reduced group $C$*-algebras of $G$, coincide. So when $G$ is nonamenable, this begs the question as to whether there are any intermediate or {\it exotic} group $C$*-algebras between $C^*_r(G)$ and $C^*(G)$. It is generally believed that there should be many such exotic group $C$*-algebras, however few examples are known.

An early class of intermediate group $C$*-algebras is due to Bekka, Kaniuth, Lau, and Schlichting. Let $G$ be a locally compact group and $G_d$ be the group $G$ endowed with the discrete topology. In their 1996 paper \cite{bkls}, these authors give a characterization of when $\lambda_G$, the left regular representation of $G$ viewed as a representation of $G_d$, is weakly contained in $\lambda_{G_d}$, the left regular representation of $G_d$. For a large class of groups $G$ where $\lambda_{G_d}$ does not weakly contain $\lambda_G$, the group $C$*-algebra $C^*_{\lambda_G}(G_d)$ lies strictly between the reduced and full group $C$*-algebras.

Another early class of exotic group $C$*-algebras is produced in Bekka's 1999 paper \cite{b}. Here, Bekka demonstrates a class of arithmetic groups $\Gamma$ for which $C^*(\Gamma)$ is not residually finite dimensional. For such $\Gamma$, $C_{\mc F}^*(\Gamma)$ is an exotic group $C$*-algebra when $\Gamma$ is maximally almost periodic.

Recently, Brown and Guentner introduced the notion of ideal completions for discrete groups $\Gamma$ \cite{bg}. This allows one to construct group $C$*-algebras of $\Gamma$ associated to $\ell^p(\Gamma)$ (denoted $C^*_{\ell^p}(\Gamma)$) for $1\leq p<\infty$. It turns out that the only interesting case to consider is when $p\in (2,\infty)$ \cite[Proposition 2.11]{bg}. Let $\F_d$ be a free group on $2\leq d<\infty$ generators. In \cite[Proposition 4.2]{bg}, Brown and Guentner show that there exists a $p\in (2,\infty)$ so that $C^*_{\ell^p}(\Gamma)$ is an intermediate $C$*-algebra. Subsequently, Okayasu was able to adapt arguments due to Haagerup to show that each of these $C$*-algebras are distinct for $2\leq p<\infty$ \cite{o}, thus giving an infinite chain of intermediate $C$*-algebras associated to $\mathbb F_d$. It follows that the $C$*-algebras $C^*_{\ell^p}(\Gamma)$ are all distinct for any discrete group $\Gamma$ containing a copy of the free group.

In this paper, we aim to compare these existing constructions and introduce new constructions of exotic $C$*-algebras associated to a discrete group $\Gamma$. In section 2 we provide the necessary background on ideal completions and prove some supplementary results. Section 3 introduces an intuitive lattice structure which can be placed on the group $C$*-algebras of $\Gamma$. With the exception of examples given in section 2, all of our new constructions of $C$*-algebras arise by using this lattice structure. In sections 4 and 5 we focus our attention towards studying intermediate $C$*-algebras on specific groups. Section 4 studies $SL_n(S)$ where $S$ is a dense subring of $\R$ while section 5 analyzes $SL_n(\Z)$. Specific attention is paid in comparing the exotic group $C$*-algebras associated to $\ell^p$ with the constructions due to \cite{bkls} and \cite{b}, respectively.

\section{Ideal completions}

As mentioned in the introduction, this section aims to provide a quick introduction to ideal completions, a concept introduced in \cite{bg}. Along the way, we generalize a result on amenability to the setting of homogeneous spaces and prove that the induced representation of an $\ell^p$-representation remains an $\ell^p$-representation. This section concludes with a class of examples of exotic group $C$*-algebras for $\F_\infty$, the free group on countably many generators, which lie off the chain of $C$*-algebras associated to $\ell^p$.

Let $\Gamma$ be a discrete group and $D\lhd\ell^\infty(\Gamma)$ an algebraic ideal. A (unitary) representation $\pi\fn \Gamma\to B(\Hi)$ is said to be a $D$-representation if $\Hi$ admits a dense linear subspace $\Hi_0$ so that $\pi_{x,x}\in D$ for every $x\in \Hi_0$. It is easily verified that the $D$-representations are closed under tensor products against arbitary representations of $\Gamma$ and under arbitrary direct sums of $D$-representations \cite[Remarks 2.4, 2.5]{bg}. Associated to these $D$-representations, we define the $C$*-seminorm $\|\cdot\|_D$ on the group ring $\C[\Gamma]$ by
$$\|x\|_D=\sup\{\|\pi(x)\| : \pi\tn{ is a $D$-representation}\} $$
and denote the ``completion'' of $\C[\Gamma]$ with respect to $\|\cdot\|_D$ by $C^*_D(\Gamma)$. This process of producing $C$*-algebras is called ideal completions. For our purposes, the most interesting algebraic ideal to consider is when $D=\ell^p(\Gamma)$. Later, we will also introduce ideals $D_p$ which are defined with respect to a fixed subgroup $H\leq \Gamma$ and generalize the ideals $\ell^p$.

In the construction of these ideal completions, it is desirable for the ideal $D$ to be translation invariant (under both left and right translation). This guarantees that when $D$ is nonzero, the left regular representation is a $D$-representation and, hence, the $D$-representations separate points of $\C[\Gamma]$ (see the remark following Definition 2.6 in \cite{bg}). This also ensures the desirable property that if $\varphi$ is a positive definite function which lies in $D$, then the GNS representation associated to $\varphi$ is a GNS representation \cite[Lemma 3.1]{bg} and, hence, that $\varphi$ extends to a positive linear functional on $C^*_D(\Gamma)$.

Brown and Guentner, recognizing the importance of the case when $D=\ell^p$, developed some basic theory of $\ell^p$ ideal completions. They demonstrated that for every $p\in [1,2]$, the ideal completion $C_{\ell^p}(\Gamma)$ simply gives the reduced $C$*-algebra $C^*_r(\Gamma)$ \cite[Proposition 2.11]{bg} and showed that if there exists $p\in [1,\infty)$ so that $C^*_{\ell^p}(\Gamma)=C^*(\Gamma)$, then $\Gamma$ is amenable \cite[Proposition 2.12]{bg} (in their proof of \cite[Proposition 2.12]{bg}, Brown and Guentner assume that $\Gamma$ is countable. This assumption is not necessary as will be demonstrated in Proposition \ref{1}). Rephrasing this proposition, we get the characterization that $\Gamma$ is amenable if and only if there exists $p\in [1,\infty)$ so that $C^*_{\ell^p}(\Gamma)=C^*(\Gamma)$ if and only if $C^*_{\ell^p}(\Gamma)=C^*(\Gamma)$ for every $1\leq p<\infty$.

Suppose $G$ is a locally compact group, $H$ is a closed subgroup, and $\mu$ a quasi-invariant measure on the homogeneous space $G/H$. We say the homogeneous space $G/H$ is amenable if $L^\infty(G/H,\mu)$ admits a $G$-invariant mean (see \cite{e}). This leads one to consider the question: can we give an analagous characterization of amenability of $\Gamma/H$ as mentioned above? For fixed $H\leq \Gamma$, define
$$ D_p=D_p(H)=\{f\in \ell^\infty(\Gamma) : f|_{sHt}\in \ell^p(sHt)\tn{ for }s,t\in \Gamma\}. $$
Is it the case that $C_{D_p}^*(\Gamma)=C^*_{\ell^p}(\Gamma)$ if and only if $\Gamma/H$ is amenable? In this case, taking $H$ to be the trivial subgroup would recover the original result. Unfortunately, we do not know the answer to this question but we have attained some partial results including the reverse implication.

\begin{prop}\label{2}
Suppose $\Gamma/H$ is amenable. Then $C_{D_p}^*(\Gamma)=C_{\ell^p}^*(\Gamma)$ for every $p\in [1,\infty)$.
\end{prop}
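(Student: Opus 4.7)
The plan is to prove both inequalities between the $C$*-seminorms $\|\cdot\|_{D_p}$ and $\|\cdot\|_{\ell^p}$ on $\C[\Gamma]$. One direction is immediate: since any $\ell^p$-function on $\Gamma$ restricts to an $\ell^p$-function on each double translate $sHt$, we have $\ell^p(\Gamma)\subseteq D_p$, and so every $\ell^p$-representation is a $D_p$-representation. This yields $\|\cdot\|_{\ell^p}\leq \|\cdot\|_{D_p}$ without using amenability.

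For the reverse inequality, given any $D_p$-representation $\pi\fn \Gamma\to B(\Hi)$, I will exhibit an $\ell^p$-representation which weakly contains $\pi$. The natural candidate is $\pi\otimes\lambda_{\Gamma/H}$, where $\lambda_{\Gamma/H}$ is the quasi-regular representation of $\Gamma$ on $\ell^2(\Gamma/H)$. Amenability of the homogeneous space $\Gamma/H$ is equivalent (via Eymard \cite{e}) to $\lambda_{\Gamma/H}$ weakly containing the trivial representation $1_\Gamma$, and tensoring against $\pi$ then produces weak containment of $\pi\otimes 1_\Gamma\cong \pi$ in $\pi\otimes\lambda_{\Gamma/H}$. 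This is the sole place where the hypothesis on $\Gamma/H$ enters.

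The main technical step — and the most delicate one — is to verify that $\pi\otimes\lambda_{\Gamma/H}$ is actually an $\ell^p$-representation. Fix a dense subspace $\Hi_0\subseteq \Hi$ witnessing the $D_p$-property of $\pi$; polarization, together with the fact that $D_p$ is an ideal (hence in particular a vector space), then yields $\pi_{x,y}\in D_p$ for all $x,y\in\Hi_0$. For a typical vector $\xi = \sum_i x_i\otimes\delta_{t_iH}$ with $x_i\in\Hi_0$, the diagonal matrix coefficient is
$$\langle (\pi\otimes\lambda_{\Gamma/H})(s)\xi,\xi\rangle = \sum_{i,j}\pi_{x_i,x_j}(s)\,\mathbf{1}_{s\in t_jHt_i^{-1}},$$
and each summand lies in $\ell^p(\Gamma)$ directly from the two-sided definition of $D_p$ (taking $s' = t_j$, $t' = t_i^{-1}$ in the defining condition). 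Such vectors $\xi$ span a dense subspace of $\Hi\otimes\ell^2(\Gamma/H)$, so $\pi\otimes\lambda_{\Gamma/H}$ is indeed an $\ell^p$-representation.

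Combining the two ingredients gives $\|\pi(x)\|\leq \|\pi\otimes\lambda_{\Gamma/H}(x)\|\leq \|x\|_{\ell^p}$ for every $x\in \C[\Gamma]$ and every $D_p$-representation $\pi$, completing the proof. The subtle point is that the apparently redundant two-sided form of the definition of $D_p$ (involving $sHt$ rather than merely $H$) is exactly what is needed to absorb the translates $t_jHt_i^{-1}$ appearing in the matrix coefficients above — without it, the argument would only show that $\pi\otimes \lambda_{\Gamma/H}$ is again a $D_p$-representation, which is useless.
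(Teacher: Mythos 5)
Your proof is correct, and it is a genuinely workable variant of the paper's argument, though the engine is the same in both: amenability of $\Gamma/H$ enters only through the quasi-regular representation $\sigma=\mathrm{Ind}_H^\Gamma 1_H=\lambda_{\Gamma/H}$ (almost-invariant unit vectors in $\ell^2(\Gamma/H)$, equivalently $1_\Gamma\prec\lambda_{\Gamma/H}$), and the two-sided form of $D_p$ is what makes coefficients supported on finitely many translates $sHt$ land in $\ell^p(\Gamma)$. The difference is in packaging. The paper works at the level of positive definite functions and states: it fixes a positive definite $\varphi\in D_p$, multiplies by the coefficients $\sigma_{f_i,f_i}$ with $f_i$ finitely supported (these products are exactly coefficients of $\pi\otimes\sigma$ at vectors of the type you use), notes each product is in $\ell^p$, and passes to the pointwise limit to extend $\varphi$ to a state on $C^*_{\ell^p}(\Gamma)$; it must then handle an arbitrary positive linear functional on $C^*_{D_p}(\Gamma)$ by a further approximation through sums of coefficients of $D_p$-representations and then by elements of $D_p$. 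You instead prove directly that for every $D_p$-representation $\pi$ the representation $\pi\otimes\lambda_{\Gamma/H}$ is an $\ell^p$-representation (your coefficient computation at vectors $\sum_i x_i\otimes\delta_{t_iH}$, with polarization to get $\pi_{x,y}\in D_p$, is correct) and weakly contains $\pi$, which yields $\|x\|_{D_p}\leq\|x\|_{\ell^p}$ in one stroke and bypasses the state-approximation step entirely; the price is that you invoke the standard facts that weak containment is preserved under tensoring with a fixed representation and Eymard's characterization of amenability of $\Gamma/H$ by $1_\Gamma\prec\lambda_{\Gamma/H}$, both of which are legitimate and indeed consistent with what the paper uses elsewhere. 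Your closing remark about the necessity of the two-sided condition $sHt$ is exactly the right observation; it is the same point that makes the paper's $\varphi\sigma_{f_i,f_i}\in\ell^p$ step work.
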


\begin{proof}

For $s\in \Gamma$ and $f\fn \Gamma/H\to \C$, we let $f_s$ denote the left translation of $f$ by $s$. Since $\Gamma/H$ is amenable, there exists a net $\{f_i\}$ of finitely supported functions in $\Gamma/H$ with $\|f_i\|_2=1$ so that $\|(f_i)_{s}-f_i\|_2\to 0$ for every $s\in \Gamma$ \cite[p. 28]{e}. Let $\sigma$ be the induced representation $\mathrm{Ind}_H^\Gamma 1_H$. Then $\sigma_{f_i,f_i}$ are positive definite functions converging pointwise to the trivial representation.

Fix a positive definite function $\varphi\in D_p$. Then, since $\sigma_{f_i,f_i}$ is supported on only finitely many cosets $sH$ for each $i$, we have that $\varphi\sigma_{f_i,f_i}\in \ell^p$ and, hence, extends to a positive linear functional on $C^*_{\ell^p}(\Gamma)$ for every $i$. Since it is also the case that $\varphi\sigma_{f_i,f_i}\to \varphi$ pointwise, we conclude that $\varphi$ extends to a positive linear functional on $C_{\ell^p}^*(\Gamma)$.

Now let $\varphi$ be an arbitrary positive linear functional on $C^*_{D_p}(\Gamma)$. Then we can find a net $\{\varphi_i\}$ of sums of positive definite functions associated to $D_p$-representations converging pointwise to $\varphi$. By approximating each $\varphi_i$ by positive definite functions in $D_p$, we may assume that $\{\varphi_i\}\subset D_p$. Then, since each $\varphi_i$ extends to a positive linear functional on $C^*_{\ell^p}(\Gamma)$ and $\varphi$ is the pointwise limit of these positive definite functions, we conclude that $\varphi$ extends to a positive linear functional on $C^*_{\ell^p}(\Gamma)$. Hence, $\|x\|_{D_p}\leq \|x\|_{\ell^p}$ for every $x\in \C[\Gamma]$. As the reverse inequality is clear, we conclude that $C_{D_p}^*(\Gamma)=C^*_{\ell^p}(\Gamma)$.
\end{proof}

We are yet to determine whether the forward implication is also true, but we are able to show it in the modest case when $\Gamma$ is the direct product $H\times K$.

\begin{prop}\label{1}
Let $p\in [1,\infty)$ and suppose $\Gamma=H\times K$. If $C^*_{\ell^p}(\Gamma)=C^*_{D_p(H)}(\Gamma)$, then $K$ is amenable. 
\end{prop}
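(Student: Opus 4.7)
The plan is to deduce from the hypothesis that the two seminorms $\|\cdot\|_{C^*(K)}$ and $\|\cdot\|_{C^*_{\ell^p}(K)}$ agree on $\C[K]$, forcing $C^*(K)=C^*_{\ell^p}(K)$, and then to invoke \cite[Proposition 2.12]{bg} to conclude that $K$ is amenable. I accomplish this by sandwiching $\|\cdot\|_{C^*_{\ell^p}(\Gamma)}$ restricted to $\C[K]$ between $\|\cdot\|_{C^*(K)}$ (from below, using the hypothesis) and $\|\cdot\|_{C^*_{\ell^p}(K)}$ (from above, unconditionally).

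For the lower bound, I would let $\pi$ be an arbitrary unitary representation of $K$ and consider $\lambda_H\otimes\pi$ as a representation of $\Gamma=H\times K$ on $\ell^2(H)\otimes\Hi_\pi$ via $(h,k)\mapsto \lambda_H(h)\otimes\pi(k)$. For $\xi\in\ell^2(H)$ finitely supported and $\eta\in\Hi_\pi$, the diagonal matrix coefficient at $\xi\otimes\eta$ factors as
\[
(h,k)\mapsto \langle \lambda_H(h)\xi,\xi\rangle\langle\pi(k)\eta,\eta\rangle,
\]
which has finite support in the $H$-variable and is bounded in the $K$-variable. Since every double coset in the direct product takes the form $sHt=H\times \{k_0\}$, the restriction of this matrix coefficient to any such coset is finitely supported, hence lies in $\ell^p(sHt)$, so $\lambda_H\otimes\pi$ is a $D_p(H)$-representation. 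By the hypothesis it extends to a representation of $C^*_{\ell^p}(\Gamma)$, and since $(\lambda_H\otimes\pi)(x)=I\otimes\pi(x)$ for $x\in\C[K]$, taking $\pi$ to be the universal representation of $K$ yields $\|x\|_{C^*(K)}\leq \|x\|_{C^*_{\ell^p}(\Gamma)}$ for all $x\in\C[K]$.

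For the upper bound, I would observe that the restriction $\sigma|_K$ of any $\ell^p$-representation $\sigma$ of $\Gamma$ remains an $\ell^p$-representation of $K$: if $\Hi_0$ is the dense subspace realizing the $\ell^p$-property for $\sigma$, then for $\xi\in\Hi_0$,
\[
\sum_{k\in K}|\langle\sigma(e,k)\xi,\xi\rangle|^p \leq \sum_{(h,k)\in\Gamma}|\langle\sigma(h,k)\xi,\xi\rangle|^p<\infty,
\]
so the same $\Hi_0$ witnesses $\sigma|_K$ as an $\ell^p$-representation, giving $\|x\|_{C^*_{\ell^p}(\Gamma)}\leq\|x\|_{C^*_{\ell^p}(K)}$ on $\C[K]$.

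Chaining the two inequalities produces $\|x\|_{C^*(K)}\leq \|x\|_{C^*_{\ell^p}(K)}$ on $\C[K]$, forcing $C^*(K)=C^*_{\ell^p}(K)$, and hence the amenability of $K$. The creative step is the construction $\lambda_H\otimes\pi$: tensoring against the regular representation of $H$ is precisely what packages an arbitrary representation of $K$ as a $D_p(H)$-representation of $\Gamma$ without distorting its $K$-part, exploiting the double-coset structure of the direct product. The restriction argument and the final appeal to the amenability characterization are then routine.
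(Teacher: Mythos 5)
Your reduction is correct and takes a genuinely different route from the paper. You show that the hypothesis forces $C^*(K)=C^*_{\ell^p}(K)$: the tensor representations $\lambda_H\otimes\pi$ are $D_p(H)$-representations (the dense subspace has to be the \emph{span} of your elementary tensors, but the cross coefficients also factor as a finitely supported function of $h$ times a bounded function of $k$, and $D_p$ is linear, so this is fine), while restrictions of $\ell^p$-representations of $\Gamma$ to $K$ are $\ell^p$-representations of $K$; sandwiching gives the claim. The paper instead works directly with positive definite functions: it inflates a normalized $\omega\in\ell^p(H)$ to $\varphi(h,k)=\omega(h)\in D_p$, approximates it by $\ell^p$ positive definite functions, raises to an $n$-th power with $p/n\leq 2$ to land in $\ell^2$ and hence in the reduced algebra, and then passes from vectors $f_i\in\ell^2(\Gamma)$ to $g_i(k)=\|f_i|_{H\times\{k\}}\|_2$ to produce almost invariant vectors for $\lambda_K$. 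Your tensor trick is clean and isolates a nice intermediate statement that the paper does not state explicitly.

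However, the final step is a genuine gap in this context. You conclude amenability of $K$ from $C^*(K)=C^*_{\ell^p}(K)$ by citing \cite[Proposition 2.12]{bg}, but, as the paper notes immediately before the statement, Brown and Guentner prove that proposition only for countable groups, and Proposition \ref{1} (specialized to trivial $H$, where $D_p(H)=\ell^\infty(\Gamma)$ and so $C^*_{D_p}(\Gamma)=C^*(\Gamma)$) is precisely what is supposed to remove that countability assumption. As written, your argument therefore proves the proposition only for countable $K$, and in the uncountable case it is circular relative to what the proposition is meant to establish. The repair is short and countability-free: since the constant function $1$ is a state on $C^*_{\ell^p}(K)$, it is a pointwise limit of a net of positive definite functions lying in $\ell^p(K)$ (translation invariance of the ideal and \cite[Lemma 3.1]{bg}); raising these to an $n$-th power with $p/n\leq 2$ yields $\ell^2$-summable positive definite functions converging pointwise to $1$, so $\lambda_K$ weakly contains the trivial representation and $K$ is amenable --- which is exactly the engine of the paper's own proof.
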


\begin{proof}

Suppose that $C^*_{\ell^p}(\Gamma)=C^*_{D_p}(\Gamma)$ and let $\omega\in\ell^p(H)$ be a normalised positive definite function on $H$. Define $\varphi\fn \Gamma\to \C$ by $\varphi(h,k)=\omega(h)$. Then $\varphi$ is a positive definite function which lies in $D_p(H)$ and, hence, extends to a positive linear functional on $C^*_{\ell^p}(\Gamma)$. So we can find a net $\{\varphi_i\}$ of positive definite functions in $\ell^p$ converging pointwise to $\varphi$.

Choose $n$ large enough so that $p/n\leq 2$ and define $\psi=\varphi^n$, $\psi_i=\varphi_i^n$. Then $\{\psi_i\}$ is a net of $\ell^2$-summable positive definite functions convergiving pointwise to $\psi$. Hence, $\psi$ extends to a positive linear functional on $C_r^*(\Gamma)$. Thus, there is a net $\{f_i\}\subset \ell^2(\Gamma)$ with $\|f_i\|=1$ so that $\{\lambda_{f_i,f_i}\}$ converges to $\psi$ pointwise (sums of positive definite functions associated to $\lambda$ can be written in this form by \cite[p. 218]{fa}).

Define $g_i\fn K\to \C$ by $g_i(k)=\|f_i|_{H\times \{k\}}\|_2$. Then $\|g_i\|_2=1$ for every $i$. Further,
\begin{eqnarray*}
\big|\lambda_{f_i,f_i}(e,k)\big| &=& \bigg|\sum_{(h,k')\in H\times K} f_i(h,k^{-1}k')f_i(h,k')\bigg| \\
&\leq & \sum_{k'\in K}\big\|(f_i)_{(e,k^{-1})}|_{H\times \{k'\}}\cdot f_i|_{H\times\{k'\}}\big\|_1 \\
&\leq & \sum_{k'\in K} \big\|f_i|_{H\times \{k^{-1}k'\}}\big\|_2\big\|f_i|_{H\times\{k'\}}\big\|_2 \\
&=& \sum_{k'\in K} g_i(k^{-1}k')g_i(k') \\
&=& \lambda_{g_i,g_i}(k) \leq 1.
\end{eqnarray*}
Consequently, $\{\lambda_{g_i,g_i}\}$ converges pointwise to the trivial representation since $\{\lambda_{f_i,f_i}(e,k)\}$ converges to $\psi(e,k)=1$ for every $k\in K$. Hence, $K$ is amenable.
\end{proof}

Brown and Guentner demonstrated in \cite[Proposition 2.11]{bg} that $C_{\ell^p}^*(\Gamma)=C_r^*(\Gamma)$ for every $p\in [1,2]$. It is natural to wonder if this continues to hold true for $p\in (2,\infty)$, however this is not the case. Let $\mathbb{F}_d$ be the free group on $d$ generators for fixed $2\leq d<\infty$. Brown and Guentner were able to show that there exists $p\in (2,\infty)$ so that $C^*(\mathbb F_d)\neq C^*_{\ell^p}(\mathbb F_d)\neq C^*_r(\Gamma)$ \cite[Proposition 4.4]{bg}. Subsequently, Higson, Ozawa, and Okayasu \cite[Corollary 3.7]{o} independently showed that the $C^*_{\ell^p}(\mathbb F_d)$ are all distinct $C^*$-algebras for $p\in (2,\infty)$. This allows us to conclude that if $\Gamma$ contains a copy of the free group, then $C^*_{\ell^p}(\Gamma)$ are distinct for $2\leq p<\infty$:

\begin{remark}\label{3}
If $H$ is any subgroup of $\Gamma$ and $\psi$ an $\ell^p$-summable positive definite function on $H$, then we may naively extend $\psi$ to an $\ell^p$-summable positive definite function $\varphi$ on $\Gamma$ by defining $\varphi(s)=\psi(s)$ when $s\in H$ and $\varphi(s)=0$ otherwise. It follows that $C^*_{\ell^p}(\Gamma)$ are all distinct for $p\in [2,\infty)$ and $\Gamma$ containing a copy of the free group.
\end{remark}

This remark leads to the question: what other extension type results exist? The following theorem shows that the induced representation of an $\ell^p$-representation remains an $\ell^p$-representation.

\begin{theorem}\label{induced}
Let $H$ be a subgroup of the discrete group $\Gamma$ and $\sigma\fn H\to\Hi$ an $\ell^p$-representation of $H$. Then $\pi:=\mathrm{Ind}_H^\Gamma\sigma$ is an $\ell^p$-representation of $\Gamma$.
\end{theorem}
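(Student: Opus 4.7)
The plan is to realize the induced representation concretely and then exhibit a dense subspace of its Hilbert space on which the diagonal matrix coefficients are $\ell^p$-summable. Fix a transversal $T\subset \Gamma$ for the right cosets of $H$, so that $\Gamma=\bigsqcup_{t\in T} tH$, and realize $\pi=\mathrm{Ind}_H^\Gamma\sigma$ on the Hilbert space $\K$ of covariant functions $f\fn\Gamma\to \Hi$ satisfying $f(sh)=\sigma(h^{-1})f(s)$ and $\|f\|^2=\sum_{t\in T}\|f(t)\|_{\Hi}^2<\infty$, with $\Gamma$ acting by $(\pi(g)f)(s)=f(g^{-1}s)$. Let $\Hi_0$ be the dense linear subspace of $\Hi$ witnessing that $\sigma$ is an $\ell^p$-representation. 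For each $t\in T$ and $x\in\Hi_0$, let $f_{t,x}\in\K$ be the unique function supported on $tH$ with $f_{t,x}(t)=x$. The linear span $\K_0$ of $\{f_{t,x}\fn t\in T,\, x\in\Hi_0\}$ is dense in $\K$, since these vectors run over an orthogonal decomposition of $\K$ indexed by $T$ and $\Hi_0$ is dense in $\Hi$.

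Next I would carry out the matrix coefficient computation. For $s,t\in T$, $x,y\in\Hi_0$, and $g\in\Gamma$, the defining inner product on $\K$ gives $\pi_{f_{s,x},f_{t,y}}(g)=\sum_{t'\in T}\lla f_{s,x}(g^{-1}t'),f_{t,y}(t')\rra_{\Hi}=\lla f_{s,x}(g^{-1}t),y\rra_{\Hi}$, which vanishes unless $g^{-1}t\in sH$, i.e., unless $g\in tHs^{-1}$. Writing $g=tks^{-1}$ for the uniquely determined $k\in H$, the covariance relation yields $f_{s,x}(g^{-1}t)=f_{s,x}(sk^{-1})=\sigma(k)x$, so
$$\pi_{f_{s,x},f_{t,y}}(g)=\begin{cases}\sigma_{x,y}(k) & \text{if } g=tks^{-1},\ k\in H,\\ 0 & \text{otherwise.}\end{cases}$$
Since $k\mapsto tks^{-1}$ is a bijection of $H$ onto $tHs^{-1}$, this gives
$$\sum_{g\in\Gamma}\bigl|\pi_{f_{s,x},f_{t,y}}(g)\bigr|^p=\sum_{k\in H}\bigl|\sigma_{x,y}(k)\bigr|^p.$$

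The remaining step is to show the right-hand side is finite. Polarisation expresses $\sigma_{x,y}$ as a linear combination of four diagonal matrix coefficients $\sigma_{z,z}$ with $z\in\Hi_0$ (using that $\Hi_0$ is a linear subspace), each of which lies in $\ell^p(H)$ by hypothesis, so $\sigma_{x,y}\in\ell^p(H)$. Consequently $\pi_{f_{s,x},f_{t,y}}\in\ell^p(\Gamma)$ for every pair of building blocks, and for an arbitrary $f=\sum_{j=1}^n f_{t_j,x_j}\in\K_0$ the triangle inequality gives $\|\pi_{f,f}\|_{\ell^p(\Gamma)}\leq\sum_{j,k}\|\pi_{f_{t_j,x_j},f_{t_k,x_k}}\|_{\ell^p(\Gamma)}<\infty$. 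Thus $\K_0$ is a dense linear subspace witnessing that $\pi$ is an $\ell^p$-representation. The only real obstacle is the bookkeeping in the matrix coefficient computation above; once that formula is in hand, the $\ell^p$-summability for $\pi$ reduces directly to the $\ell^p$-summability of matrix coefficients of $\sigma$ via polarisation.
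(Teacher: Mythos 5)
Your proof is correct and follows essentially the same route as the paper: realize $\mathrm{Ind}_H^\Gamma\sigma$ on covariant functions, take the dense span of vectors supported on a single coset with values in $\Hi_0$, compute that the matrix coefficient of two such vectors is a translate of $\sigma_{x,y}$ supported on one double-coset-type set, and invoke polarization so that $\sigma_{x,y}\in\ell^p(H)$. The only difference is that you spell out the final triangle-inequality step for finite sums of building blocks, which the paper leaves implicit.
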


\begin{proof}
Let $q\fn \Gamma\to \Gamma/H$ denote the canonical quotient map. Recall that the induced representation $\pi$ is given by left translation on the completion $\mc F$ of the space
$$ \mc F_0=\{f\fn \Gamma\to \Hi \mid q(\mathrm{supp}\,f)\tn{ is finite and } f(s\xi)=\sigma(\xi^{-1})f(s) \tn{ for all }s\in\Gamma, \xi\in H\} $$
with respect to the inner product
$$ \lla f,g\rra=\sum_{tH\in\Gamma/H}\lla f(t),g(t)\rra_\sigma.$$

Let $\Hi_0$ be a dense linear subspace of $\Hi$ such that $\sigma_{x,y}\in\ell^p(H)$ for every $x,y\in\Hi_0$ (if $\pi_{x,x}\in \ell^p(H)$ for every $x\in \Hi_0$, then $\pi_{x,y}\in \ell^p(H)$ for all $x,y\in\Hi_0$ by the polarization identity). Fix a set of representatives $\{r_i\}_{i\in \Gamma/H}$ for $\Gamma/H$. Then the span of the functions $f\in \mc F_0$ such that $f(r_i)$ is nonzero for at most one $i$ and $f(r_i)\in\Hi_0$ is dense in $\mc F$.

Fix $f$ and $g$ as above. Without loss of gerenality, we may assume that $f$ and $g$ are nonzero. Let $i$ and $j$ be the indices such that $f(r_i)\neq 0$, $g(r_j)\neq 0$. Then
\begin{eqnarray*}
\sum_{s\in \Gamma} |\pi_{f,g}(s)|^p &=& \sum_{s\in\Gamma} \bigg| \sum_{k\in \Gamma/H} \lla f(s^{-1}r_k), g(r_k)\rra \bigg |^p \\
&=& \sum_{s\in\Gamma} \big| \lla f(s^{-1}r_j),g(r_j)\rra \big|^p \\
&=& \sum_{\xi\in H} \big| \lla f(r_i\xi),g(r_j)\rra \big|^p \\
&=& \sum_{\xi\in H} \big| \lla \sigma(\xi^{-1})f(r_i),g(r_j)\rra\big|^p \\
&=& \|\sigma_{f(r_i),g(r_j)}\|_p^p<\infty.
\end{eqnarray*}
It follows that $\pi$ is an $\ell^p$-representation.
\end{proof}

Much of the attention in this section has been focused towards the chain of $\ell^p$ ideal completions. This raises the question, can we find exotic group $C$*-algebras which lie off this chain? We end this section by showing that $D_p$ ideal completions can satisfy this criteria.

\begin{example}
Fix $p\in [2,\infty)$ and let $\F_\infty$ be the free group on countably many generators $a_1,a_2,\ldots$ and view $\F_d$ as the subgroup of $\F_\infty$ generated by $a_1,\ldots,a_d$. Take $H=\mathbb F_d$ for some fixed $d\geq 2$. Let $\varphi_\alpha\fn \F_\infty\to \C$ be the positive definite function defined by $\varphi_\alpha(s)=\alpha^{|s|}$ for each $\alpha\in (0,1)$ (see \cite[Lemma 1.2]{h}). Then $\varphi_\alpha\in D_p=D_p(\F_d)$ for each $\alpha<(2d-1)^{-1/p}$ since
$$\sum_{s\in \F_d}\alpha^{-|t_1|}\alpha^{-|t_2|}\varphi_\alpha(s)\leq \sum_{s\in \F_d}\varphi_\alpha(t_1st_2)\leq \sum_{s\in \F_d}\alpha^{|t_1|}\alpha^{|t_2|}\varphi_\alpha(s)$$
for every $t_1,t_2\in \F_\infty$ and $\sum_{s\in \F_d}\varphi_\alpha(s)<\infty$ if and only if $\alpha<(2d-1)^{-1/p}$. Hence, $\varphi_\alpha$ extends to a positive linear functional on $C^*_{D_p}(\F_\infty)$ for each $\alpha\leq (2d-1)^{-1/p}$. By \cite[Corollary 3.5]{o}, we have that $\varphi_\alpha|_{\F_d}$ extends to a positive linear functional on $C^*_{\ell^p}(\F_d)$ if and only if $\alpha\leq (2d-1)^{-1/p}$. Therefore, this condition of $\alpha\leq (2d-1)^{-1/p}$ is necessary and sufficient for $\varphi_\alpha$ to extend to a positive linear functional on $C^*_{D_p}(\F_\infty)$.

Fix $\alpha\in (0,1)$ and choose a positive integer $d'$ large enough so that $(2d'-1)^{-1/p}<\alpha$. Then $\varphi_\alpha|_{\F_{d'}}$ does not extend to a positive linear functional on $C^*_{\ell^p}(\F_{d'})$. Hence, $\varphi_\alpha$ does not extend to a positive linear functional on $C^*_{\ell^p}(\F_\infty)$ for any $\alpha\in (0,1)$. Therefore there is no canoncial quotient map from $C^*_{\ell^q}(\F_\infty)$ to $C^*_{D_p}(\F_\infty)$ for any $p,q\in [2,\infty)$. Conversely, by Remark \ref{3}, there is no canonical quotient map from $C^*_{D_p}(\F_\infty)$ to $C^*_{\ell^q}(\F_\infty)$ for any $q>p\geq 2$.

Now suppose $\Gamma$ is an arbitrary group containing a copy of the free group. Since the free group on two generators contains an isomorphic copy of $\mathbb F_\infty$, a similar argument as above can be used to justify there is a subgroup $H\leq \Gamma$ so that $C^*_{D_p}(\Gamma)$ lie off the chain $C^*_{\ell^p}(\Gamma)$.

\end{example}

\section{The lattice of group $C$*-algebras}

So far we have nonchalantly been talking about notions such as chains of group $C$*-algebras. We are able to do this because there is a natural partial ordering which can be placed on the group $C$*-algebras. In this section, we make this notion a partial ordering specific and show that the group $C$*-algebras form a complete $\bigvee$-semilattice. This will allow us to build new exotic group $C$*-algebras.

\begin{defn}
Let $\Gamma$ be a discrete group. By a group $C$*-algebra (associated to $\Gamma$), we will mean a $C$*-completion of $\C[\Gamma]$. Place a partial ordering on the group $C$*-algebras by saying that $\mc A\preceq\mc A'$ if $\|x\|_{\mc A}\leq \|x\|_{\mc A'}$ for every $x\in \C[\Gamma]$. Equivalently, we have that $\mc A\preceq \mc A'$ if and only if the identity map on $\C[\Gamma]$ extends to a quotient from $\mc A'$ to $\mc A$.
\end{defn}

We observe that with this definition, the group $C$*-algebras form a complete $\bigvee$-semilattice. Indeed, if $\{\mc A_i\}$ is a collection of group $C$*-algebras, then the completion of $\C[\Gamma]$ with respect to the $C$*-norm $\|\cdot\|$ defined by $\|x\|=\sup_i\|x\|_{\mc A_i}$ for $x\in \C[\Gamma]$ is the join $\bigvee_i \mc A_i$.

Note that it also makes sense to talk about the supremum and infimum of ``completions'' of $\C[\Gamma􀀀]$ with respect to a $C$*-seminorm. If $\mc S$ is a collection of representations of $\Gamma$, then $C^*_{\mc S}(\Gamma)$ is defined to be the ``completion'' of $\C[\Gamma]$ with respect to the $C$*-seminorm $\|x\|_{\mc S}:=\sup_{\pi\in \mc S}\|\pi(x)\|$ (as in \cite{e}). Moreover, every $C$*-seminorm arises in this way where $\mc S$ can be assumed to be a Fell closed subset of the irreducible representations $\widehat{\Gamma}$ \cite[Proposition F.2.7]{bpt}. Further, if $\mc S$ and $\mc S'$ are Fell closed subsets of $\widehat \Gamma$, then $C^*_{\mc S}(\Gamma)\preceq C^*_{\mc S'}(\Gamma)$ if and only if $\mc S\subset \mc S'$. If we place the same lattice structure on these ``completions'' as above, we get a complete lattice. Indeed, let $\mc\{\mc A_i\}$ be a collection of such ``completions'' and write $\mc A_i=C^*_{\mc S_i}(\Gamma)$ for Fell closed subsets $\{\mc S_i\}\subset\widehat{\Gamma}$. Then $\bigwedge_i \mc A_i=C^*_{\cap \mc S_i}$. As the join arises as before, we conclude that we indeed get a complete lattice.



In the remainder of this section, we focus on using this lattice structure to produce a new class of examples of exotic group $C$*-algebras. Towards this goal, we give a characterization of when $C^*_D(\Gamma)= C^*(\Gamma)$ in terms of when the $D$-representations weakly contain an amenable representation, i.e., a representation $\pi\fn \Gamma\to B(\Hi)$ for which there exists a state $\mu$ on $B(\Hi)$ such that $\mu(\pi(s)T\pi(s^{-1})=\mu(T)$ for all $s\in\Gamma$ and $T\in B(\Hi)$ (see \cite{b-am}).

\begin{prop}\label{amenable}
Let $D\triangleleft \ell^\infty(\Gamma)$ be an algebraic ideal. Then $C^*_D(\Gamma)=C^*(\Gamma)$ if and only if the $D$-representations weakly contains an amenable representation.
\end{prop}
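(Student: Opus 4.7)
The plan is to translate the equality $C^*_D(\Gamma)=C^*(\Gamma)$ into the statement that every unitary representation of $\Gamma$ is weakly contained in the universal $D$-representation $\pi_D:=\bigoplus\{\pi:\pi\tn{ is a }D\tn{-representation}\}$, and then exploit Bekka's tensor-product characterization of amenable representations together with the fact, already recorded in this section, that $D$-representations are closed under tensor products with arbitrary representations of $\Gamma$.

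For the forward direction, I would argue that it is essentially tautological. The trivial representation $1_\Gamma\fn \Gamma\to B(\C)=\C$ is always amenable, since the identity map on $\C$ is a $\Gamma$-invariant state. If $C^*_D(\Gamma)=C^*(\Gamma)$, then $\pi_D$ is weakly equivalent to the universal representation of $\Gamma$, so in particular $1_\Gamma\prec\pi_D$, and we have exhibited an amenable representation weakly contained in the class of $D$-representations.

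For the reverse direction, suppose $\sigma$ is an amenable representation with $\sigma\prec\pi_D$, and let $\rho$ be an arbitrary representation of $\Gamma$; I want to show $\rho\prec\pi_D$. By Bekka's characterization of amenable representations, $\sigma\otimes\bar\sigma$ weakly contains $1_\Gamma$, so tensoring with $\rho$ gives $\rho\prec\sigma\otimes\bar\sigma\otimes\rho$. Using that weak containment is preserved under tensoring with a fixed representation, the relation $\sigma\prec\pi_D$ upgrades to $\sigma\otimes\bar\sigma\otimes\rho\prec\pi_D\otimes\bar\sigma\otimes\rho$. Since $D$-representations are closed under tensoring with arbitrary representations, $\pi_D\otimes\bar\sigma\otimes\rho$ is itself a $D$-representation, hence weakly contained in $\pi_D$. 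Chaining these three weak containments yields $\rho\prec\pi_D$, and since $\rho$ was arbitrary we conclude $C^*_D(\Gamma)=C^*(\Gamma)$.

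I do not expect a serious obstacle here: both ingredients (Bekka's criterion that amenability of $\sigma$ is equivalent to $\sigma\otimes\bar\sigma\succ 1_\Gamma$, and closure of $D$-representations under tensor products) are quoted results, and the compatibility of weak containment with tensor products is standard. The only point to handle carefully is bookkeeping the direction of the weak containments so that the amenable representation $\sigma$ is used to insert the trivial representation between $\rho$ and the $D$-class, rather than the other way around.
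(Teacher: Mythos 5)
Your proposal is correct and follows essentially the same route as the paper: the forward direction via the amenability of the trivial representation, and the converse via Bekka's criterion $1_\Gamma\prec\sigma\otimes\bar\sigma$ combined with closure of $D$-representations under tensoring with arbitrary representations. The only difference is cosmetic — you phrase the argument as a chain of weak containments through a universal $D$-representation, while the paper runs the same tensor-product argument with explicit nets converging in the Fell topology.
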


\begin{proof}
If $C^*_D(\Gamma)=C^*(\Gamma)$, then the $D$-representations weakly contain all representations of $\Gamma$ and, in particular, contains the trivial representation which is evidently amenable.

Conversely, suppose that $D$ weakly contains an amenable representation $\pi$. Then we can find a net $\{\pi_i\}$ of $D$-representations converging in Fell's topology to $\pi$. Note that $\overline{\pi}\otimes \pi_i$ is a $D$-representation for every $i$ and $\overline{\pi}\otimes\pi_i$ converges to $\overline{\pi}\otimes\pi$ in the Fell topology. So the $D$-representations weakly contain $\overline{\pi}\otimes\pi$ and, hence, the trivial representation \cite[Theorem 5.1]{b-am}. A similar argument as used above now shows that the $D$-representations weakly contain every representation of $\Gamma$ since $\pi\otimes 1=\pi$ for every $\pi$. Hence, $C^*_D(\Gamma)=C^*(\Gamma)$.
\end{proof}

In particular, this proposition shows that if $C^*_D(\Gamma)\neq C^*(\Gamma)$, then the $D$-representations do not weakly contain any finite dimensional representations. Hence, if $C^*_D(\Gamma)$ is a group $C$*-algebra not coinciding with $C^*(\Gamma)$, then $C^*_D(\Gamma)\vee C^*_\pi(\Gamma)$ is a strictly larger $C$*-algebra than $C^*_D(\Gamma)$ for every finite dimensional representation and the only group $C$*-algebra $\A$ produced by an ideal completion for which $\A\succeq C^*_D(\Gamma)$ is $C^*(\Gamma)$.

Suppose $\Gamma$ contains a copy of the free group and $\pi$ is a finite dimensional representation of $\Gamma$. What does the group $C$*-algebra $C^*_D(\Gamma)\vee C^*_\pi(\Gamma)$ look like when we take $D=\ell^p$? Could it be the case that $C^*_{\ell^p}(\Gamma)\vee C^*_\pi(\Gamma)$ coincides with $C^*(\Gamma)$? Could $C^*_{\ell^p}(\Gamma)\vee C^*_\pi(\Gamma)$ dominate $C^*_{\ell^q}(\Gamma)$ for some $q>p\geq 2$? It turns out that neither of these cases can occur:

\begin{prop}
Suppose $\Gamma$ contains a copy of the free group and $\mc F_0$ is a finite nonempty subset of the finite dimensional representations on $\Gamma$. Then $C^*_{\ell^q}(\Gamma)\not\preceq C^*_{\mc F_0}(\Gamma)\vee C^*_{\ell^p}(\Gamma)$ for any $q>p\geq 2$.
\end{prop}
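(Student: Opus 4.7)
My plan is to restrict attention to a free subgroup via a Haagerup function and its trivial extension, and then exploit irreducibility of the cyclic representation. Fix $d\geq 2$ with $\F_d\leq\Gamma$ and pick $\alpha\in\bigl((2d-1)^{-1/p},\,(2d-1)^{-1/q}\bigr)$, a nonempty interval since $q>p$. Let $\varphi_\alpha(s)=\alpha^{|s|}$ be Haagerup's positive definite function on $\F_d$, which is $\ell^q$-summable but by \cite[Corollary 3.5]{o} does not extend to a state on $C^*_{\ell^p}(\F_d)$. By Remark \ref{3}, the trivial extension $\tilde\varphi$ of $\varphi_\alpha$ to $\Gamma$ is an $\ell^q$-summable positive definite function on $\Gamma$ and therefore extends to a state on $C^*_{\ell^q}(\Gamma)$.

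Suppose for contradiction that $C^*_{\ell^q}(\Gamma)\preceq C^*_{\mc F_0}(\Gamma)\vee C^*_{\ell^p}(\Gamma)$. Set $\rho_1=\bigoplus_{\pi\in\mc F_0}\pi$, a finite dimensional representation of $\Gamma$, and let $\rho_2$ be the direct sum of all $\ell^p$-representations of $\Gamma$, so the join coincides with $C^*_{\rho_1\oplus\rho_2}(\Gamma)$. Then $\tilde\varphi$ is a weak* limit of vector states of $\rho_1\oplus\rho_2$, each of which has the form $a_i\hat\mu_i+b_i\hat\nu_i$ with $a_i+b_i=1$ and $\hat\mu_i,\hat\nu_i$ vector states of $\rho_1,\rho_2$ respectively. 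Since the vector states of the finite dimensional $\rho_1$ form a compact subset of the finite dimensional space of matrix coefficients of $\rho_1$, and since the state space of $C^*_{\ell^p}(\Gamma)$ is weak* compact, after passing to a subnet I obtain
\[\tilde\varphi = a\hat\mu + b\hat\nu\]
pointwise on $\Gamma$, with $a+b=1$, $\hat\mu$ a vector state of $\rho_1$, and $\hat\nu$ a state on $C^*_{\ell^p}(\Gamma)$. Restricting this equation to $\F_d$ yields $\varphi_\alpha=a(\hat\mu|_{\F_d})+b(\hat\nu|_{\F_d})$, where $\hat\mu|_{\F_d}$ is a matrix coefficient of the finite dimensional representation $\rho_1|_{\F_d}$, and $\hat\nu|_{\F_d}$ extends to a state on $C^*_{\ell^p}(\F_d)$ (since the restriction of any $\ell^p$-representation of $\Gamma$ to $\F_d$ is an $\ell^p$-representation of $\F_d$).

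The key step, and the main obstacle, is ruling out this decomposition. I invoke the classical fact due to Pytlik and Szwarc that the GNS representation $\pi_\alpha$ of $\varphi_\alpha$ on $\F_d$ is irreducible and infinite dimensional for every $\alpha\in(0,1)$; consequently $\varphi_\alpha$ is a pure state of $C^*(\F_d)$. Since pure states are extreme in the state space, the convex decomposition $\varphi_\alpha=a(\hat\mu|_{\F_d})+b(\hat\nu|_{\F_d})$ forces either $\varphi_\alpha=\hat\mu|_{\F_d}$ or $\varphi_\alpha=\hat\nu|_{\F_d}$. The first alternative makes $\pi_\alpha$ a subrepresentation of the finite dimensional $\rho_1|_{\F_d}$, contradicting infinite dimensionality of $\pi_\alpha$; the second alternative makes $\varphi_\alpha$ extend to a state on $C^*_{\ell^p}(\F_d)$, contradicting $\alpha>(2d-1)^{-1/p}$ by \cite[Corollary 3.5]{o}. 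Either way we reach a contradiction.
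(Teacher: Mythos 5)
Your reduction is mostly sound: granting the comparison $C^*_{\ell^q}(\Gamma)\preceq C^*_{\mc F_0}(\Gamma)\vee C^*_{\ell^p}(\Gamma)$, the state $\tilde\varphi$ does become a state of $C^*_{\rho_1\oplus\rho_2}(\Gamma)$, and (after replacing ``weak* limit of vector states'' by the correct ``weak* limit of convex combinations of vector states,'' which still have the form $a_i\hat\mu_i+b_i\hat\nu_i$ with $\hat\mu_i$ a state of the finite-dimensional algebra $C^*_{\rho_1}(\Gamma)$) the compactness argument does produce a pointwise decomposition $\varphi_\alpha=a(\hat\mu|_{\F_d})+b(\hat\nu|_{\F_d})$ of the stated kind. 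The fatal problem is the ``key step.'' The assertion that the GNS representation of $\varphi_\alpha(s)=\alpha^{|s|}$ is irreducible for every $\alpha\in(0,1)$ is not a theorem of Pytlik and Szwarc (their result produces a uniformly bounded family with these coefficients; it says nothing about irreducibility of the GNS representations), and it is in fact false. Already for $\alpha<(2d-1)^{-1/2}$ one has $\varphi_\alpha\in\ell^2(\F_d)$, so by Godement's theorem on square-integrable positive definite functions $\varphi_\alpha$ is a coefficient of $\lambda$ and its GNS representation is a subrepresentation of $\lambda$; since the commutant of $\lambda$ is the type $\mathrm{II}_1$ factor generated by right translations, no nonzero subrepresentation of $\lambda$ is irreducible, so $\varphi_\alpha$ is not pure there.

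Purity also fails in the range you actually use, $\alpha>(2d-1)^{-1/p}$. The kernel $(x,y)\mapsto\alpha^{d(x,y)}$ on the Cayley tree $T$ of $\F_d$ is positive definite and $\mathrm{Aut}(T)$-invariant (the tree metric is conditionally negative definite), so $\varphi_\alpha$ extends to a continuous bi-$K$-invariant positive definite function on the Gelfand pair $(\mathrm{Aut}(T),K)$, $K$ a vertex stabilizer. By the Bochner--Godement theorem it is then the barycenter of a probability measure carried by the spherical positive definite functions; but $\varphi_\alpha$ is not itself spherical: the eigenfunction identity $\sum_{|t|=1}\psi(t^{-1}x)=\gamma\,\psi(x)$ forces $\gamma=\alpha^{-1}+(2d-1)\alpha$ from $|x|\geq1$ and $\gamma=2d\alpha$ from $x=e$, which is impossible for $\alpha\in(0,1)$. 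A pure state admits only the Dirac mass as a representing measure, so $\varphi_\alpha$ is never pure, and the extreme-point dichotomy at the end of your argument is unavailable; the decomposition $\varphi_\alpha=a\mu'+b\nu'$ must be excluded by some genuinely different device (for instance by showing that the almost periodic part of a state of $C^*_{\ell^p}(\F_d)$ vanishes, forcing $a=0$ and then contradicting Okayasu's criterion). Note also that the paper's own proof is of a completely different and softer nature: it compares Fell-closed subsets of $\widehat{\Gamma}$, observing that the join only adds the finitely many points of $\mc F_0$ to the support of $C^*_{\ell^p}(\Gamma)$, whereas passing from $C^*_{\ell^p}(\Gamma)$ to $C^*_{\ell^q}(\Gamma)$ adds infinitely many.
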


\begin{proof}
Without loss of generality, we may assume that $\mc F_0$ is a subset of $\widehat{\Gamma}$. For each $p\geq q$, write $C^*_{\ell^p}(\Gamma)=C^*_{\mc S_p}(\Gamma)$ for some Fell closed subset $\mc S_p\subset \widehat{\Gamma}$. Then, since $\mc S_p$ is a proper subset of $\mc S_q$ for every $q>p$, $\mc S_q\backslash \mc S_p$ has inifinite cardinality for $q>p$ (as there is an infinitude of intermediate $C$*-algebras between $C^*_{\ell^p}(\Gamma)$ and $C^*_{\ell^p}(\Gamma)$). Now, write $C^*_{\ell^p}(\Gamma)\vee C^*_{\mc F_0}(\Gamma)=C^*_{\mc S_0}(\Gamma)$ for some Fell closed $S_0\subset \widehat{\Gamma}$. Then since, $\mc F_0$ is a closed subset of $\widehat{\Gamma}$ in the Fell topology, $\mc S_0\backslash \mc S=\mc F_0$ has finite cardinality. Hence, $C^*_{\ell^p}(\Gamma)\vee C^*_{\mc F_0}(\Gamma)\not\succeq C^*_{\ell^q}(\Gamma)$ for $q>p$.
\end{proof}

This gives us another class of examples of exotic group $C$*-algebras which lie off the chain $C^*_{\ell^p}(\Gamma)$. Note that we may always take $\mc F_0$ to be the singleton containing the trivial representation. Hence, for $\Gamma$ containing a copy of the free group, this construction can always be used to produce exotic group $C$*-algeras differing from $C^*_{\ell^p}(\Gamma)$.

\section{Exotic group $C$*-algebras of $SL_n(S)$}

Let $G$ be a locally compact group and $G_d$ be the group $G$ endowed with the discrete topology. Denote the left regular representation of $G$ by $\lambda_G$. Then $\lambda_G$ is a representation of $G_d$. In \cite{bkls} Bekka, Kaniuth, Lau, and Schlitchting show that the group $C$*-algebra $C^*_{\lambda_G}(G_d)$ is the reduced $C$*-algebra if and only if $G$ admits an open subgroup $H$ so that $H_d$ is amenable. In particular, if $G$ is a connected nonamenable group such as $SL_n(\R)$ ($n\geq 2$), then $C^*_{\lambda_G}(G_d)\neq C^*_r(G_d)$. This inspires us to study the group $C$*-algebra $C^*_\delta(SL_n(S)):=C^*_{\lambda_{SL_n(\R)}}(SL_n(S))$ in this section when $S$ is taken to be a dense (unital) subring of $\R$. We will demonstrate that $C^*_\delta(SL_n(S))$ is an exotic group $C$*-algebra and compare it to the $C$*-algebras $C^*_{\ell^p}(SL_n(S))$. Our first proposition shows that $C^*_{\ell^p}(SL_n(S))\not\succeq C^*_{\delta}(SL_n(S))$ for any $1\leq p<\infty$.

Before proceeding to this proposition, we mention a result due to Breuillard and Gellander which we make use of. In \cite{freeLie}, these authors demonstrated that if $\Gamma$ is a dense subgroup of a connected semi-simple real Lie group $G$, then $\Gamma$ contains a copy of the free group on two generators which is dense in $G$. Moreover, their proof shows that these generators can be chosen arbitrarily close to the identity.

\begin{prop}\label{dense Lie}
Let $\Gamma$ be a dense subgroup of a connected semi-simple real Lie group $G$. If a representation $\pi$ of the discrete group $\Gamma$ is continuous in the ambient topology, then $\pi$ is not weakly contained in the $\ell^p$-representations for each $1\leq p<\infty$.
\end{prop}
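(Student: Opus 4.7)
The plan is to argue by contradiction. Suppose that $\pi$ is weakly contained in the class of $\ell^p$-representations of $\Gamma$. Since $\pi$ is continuous in the ambient topology inherited from $G$, for every $\epsilon>0$ I can choose an open neighborhood $U$ of the identity in $G$ so small that $|\pi_{\eta,\eta}(s)-1|<\epsilon$ for every $s\in U\cap\Gamma$ and some fixed unit vector $\eta\in\mc H_\pi$. The Breuillard--Gelander result \cite{freeLie} recalled in the paragraph above then furnishes elements $a,b\in U\cap\Gamma$ generating a free subgroup $F\cong\F_2$ of $\Gamma$.

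By weak containment, the positive definite function $\pi_{\eta,\eta}$ is the pointwise limit of sums of matrix coefficients of $\ell^p$-representations, and such finite sums can be absorbed into a single direct sum which is again an $\ell^p$-representation. Hence for each $\epsilon>0$ there is an $\ell^p$-representation $\sigma$ of $\Gamma$ and a unit vector $\xi\in\mc H_\sigma$ with $|\sigma_{\xi,\xi}(a)-1|$ and $|\sigma_{\xi,\xi}(b)-1|$ smaller than $2\epsilon$, and the identity $\|\sigma(s)\xi-\xi\|^2=2-2\tn{Re}\,\sigma_{\xi,\xi}(s)$ then gives almost invariance of $\xi$ under both generators. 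Restricting $\sigma$ to $F$ yields an $\ell^p$-representation of $F$, since $\sum_{g\in F}|\sigma_{\xi,\xi}(g)|^p\leq \sum_{g\in\Gamma}|\sigma_{\xi,\xi}(g)|^p<\infty$, and a triangle inequality on word length propagates almost invariance from $\{a,b\}$ to every finite subset of $F$.

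Letting $\epsilon\to 0$ through a net of such data, I conclude that the trivial representation $1_F$ is weakly contained in the $\ell^p$-representations of $F$. Since this class is closed under tensor products with arbitrary representations, $\rho\cong\rho\otimes 1_F$ is weakly contained in the $\ell^p$-representations for every representation $\rho$ of $F$, giving $C^*_{\ell^p}(F)=C^*(F)$. By \cite[Proposition 2.12]{bg} this forces $F$ to be amenable, contradicting $F\cong\F_2$. The crucial and least automatic step is the transfer from ``$\pi$ is weakly contained in $\ell^p$-representations of the large group $\Gamma$'' to ``$1_F$ is weakly contained in $\ell^p$-representations of the small subgroup $F$''; this works precisely because the continuity of $\pi$ lets me pick free generators of $F$ arbitrarily close to the identity of $G$, where $\pi$ is already almost trivial.
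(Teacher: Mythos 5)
Your argument is correct in substance, but it takes a genuinely different route from the paper's. The paper argues quantitatively: it invokes Okayasu's characterization (from the proof of Theorem 3.4(2) in \cite{o}) that a normalized positive definite function $\psi$ on $\F_2$ extends to a state on $C^*_{\ell^p}(\F_2)$ only if $\|\psi\chi_k\|_p\leq k+1$, and then uses Breuillard--Gelander to place free generators so close to the identity that $|\pi_{x,x}|$ is large on all $4\cdot 3^{k-1}$ words of length $k$, violating that inequality for a suitably chosen $k$. You instead argue softly: weak containment produces almost-invariant unit vectors in $\ell^p$-representations at the two free generators, restriction to the free subgroup is again an $\ell^p$-representation, the triangle inequality propagates almost invariance over word length, so the trivial representation of $\F_2$ is weakly contained in its $\ell^p$-representations; closure under tensoring then gives $C^*_{\ell^p}(\F_2)=C^*(\F_2)$, which contradicts nonamenability of $\F_2$ by \cite[Proposition 2.12]{bg}. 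Your route avoids Okayasu's Haagerup-type estimate entirely and relies only on the basic Brown--Guentner closure properties plus their amenability criterion, so it is more elementary and self-contained; the paper's route is shorter granted Okayasu's result and gives an explicit numerical obstruction. Two points to tighten: (i) your free subgroup $F$ depends on $\epsilon$, so the concluding weak-containment statement should be phrased for the abstract group $\F_2$, pulling back $\sigma|_{F_\epsilon}$ along the isomorphism sending the standard generators to $a,b$ (both the $\ell^p$-condition and the almost invariance transfer verbatim, and the needed invariance on any finite subset of $\F_2$ follows by choosing $\epsilon$ small relative to the maximal word length); (ii) the restriction $\sigma|_F$ is an $\ell^p$-representation because the dense subspace witnessing the $\ell^p$-condition for $\sigma$ also witnesses it for the restriction --- the particular almost-invariant vector $\xi$ need not have $\ell^p$ coefficients, and your displayed inequality for $\sigma_{\xi,\xi}$ is neither available nor needed.
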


(Compare this statement to that of \cite[Proposition 5]{bkls}.)

\begin{proof}
Let $\pi$ be a continuous representation of $\Gamma$ and $x\in\Hi_\pi$ be a unit vector. Then $\varphi:=\pi_{x,x}$ is a continuous function on $\Gamma$ with $\varphi(e)=1$. We will demonstrate that $\varphi$ does not extend to a state on $C^*_{\ell^p}(\Gamma)$ for any $1\leq p<\infty$.

In the proof of \cite[Theorem 3.4 (2)]{o}, Okayasu shows that a normalized positive definite function $\psi$ on $\F_2$ extends to a state on $C^*_{\ell^p}(\mathbb F_2)$ if and only if $\|\psi\chi_k\|_p\leq k+1$ (where $\chi_k$ is the characteristic function of the words of length $k$). Choose $k$ large enough so that $(4\cdot 3^{k-1})^{1/p}>k+1$. Next choose generators for a free subgroup of $\Gamma$ close enough to the identity so that $|\psi
(s)|>\frac{k+1}{(4\cdot3^{k-1})^{1/p}}$ for all $s\in W_k$. Then, on this copy of $\F_2$,
$$ \|\psi\chi_k\|_p>\left( |W_k|\left(\frac{k+1}{(4\cdot 3^{k-1})^{1/p}}\right)^p\right)^{1/p}=k+1. $$
Hence, $\psi$ does not extend to a positive linear functional on $C^*_{\ell^p}(\mathbb F_2)$ and, so, we conclude that $\pi$ is not weakly contained in the $\ell^p$-representations.
\end{proof}

To observe that the following proposition applies to our situation, we note that $SL_n(S)$ is a dense subgroup of $SL_n(\R)$ since the subgroups
$$\begin{bmatrix}
1 & & \multicolumn{1}{c}{\multirow{2}{*}{\huge *}} \\
\multicolumn{1}{c}{\multirow{2}{*}{\vspace{-5pt}\LARGE 0}} & \ddots & \\
& & 1
\end{bmatrix}
\ \mathrm{ and }\ 
\begin{bmatrix}
1 & & \multicolumn{1}{c}{\multirow{2}{*}{\vspace*{5pt}\LARGE 0}} \\
\multicolumn{1}{c}{\multirow{2}{*}{\vspace{-15pt}\huge *}} & \ddots & \\
& & 1
\end{bmatrix}
$$
generate $SL_n(\R)$.

This proposition demonstrates that $C^*_\delta(SL_n(S))$ is a strictly larger group $C$*-algebra than $C^*_r(SL_n(S))$. Bekka, Kaniuth, Lau and Schlitchting's result \cite[Proposition 5]{bkls} implies that this is the case when $S$ is taken to be all of $\R$, but it was not apriori obvious that this would continue to hold for smaller rings $S$.

We are now led to ask similar questions as in the previous section. Could $C^*_\delta(SL_n(S))$ be the full group $C^*(SL_n(S))$? How does $C^*_\delta(SL_n(S))\vee C^*_{\ell^p}(SL_n(S))$ compare to $C^*_{\ell^q}(SL_n(S)$? These questions are quite satisfactorily answered in the following proposition.

\begin{prop}
Suppose $q>p\geq 2$. Then $C^*_\delta(SL_n(S))\vee C^*_{\ell^p}(SL_n(S))\not\succeq C^*_{\ell^q}(SL_n(S))$.
\end{prop}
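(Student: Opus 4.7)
The plan is to find an $\ell^q$-summable positive definite function on $\Gamma := SL_n(S)$ which extends to a state on $C^*_{\ell^q}(\Gamma)$ but not on $\mathcal{A} := C^*_\delta(\Gamma) \vee C^*_{\ell^p}(\Gamma)$, following the spirit of the proof of Proposition \ref{dense Lie}. Fix $\alpha \in (3^{-1/p}, 3^{-1/q})$, so the Haagerup function $\varphi_\alpha(s) = \alpha^{|s|}$ on $\F_2$ extends to a state on $C^*_{\ell^q}(\F_2)$ but not on $C^*_{\ell^p}(\F_2)$ by \cite[Corollary 3.5]{o}. Using \cite{freeLie}, embed $\F_2$ as a dense subgroup of $SL_n(\R)$ contained in $\Gamma$, with free generators chosen close to the identity in the ambient topology. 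Let $\tilde\varphi_\alpha \colon \Gamma \to \C$ be the extension of $\varphi_\alpha$ by zero; by Remark \ref{3}, $\tilde\varphi_\alpha$ is an $\ell^q(\Gamma)$-summable positive definite function, so it extends to a state on $C^*_{\ell^q}(\Gamma)$.

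Suppose for contradiction that $\tilde\varphi_\alpha$ also extends to a state on $\mathcal{A}$. Weak containment of its GNS representation in $\{\lambda_{SL_n(\R)}\} \cup \{\ell^p\text{-representations of }\Gamma\}$, after grouping matrix coefficients by type, yields a sequence $(y_k, \psi_k)$ with $y_k \in L^2(SL_n(\R))$, with $\psi_k$ an $\ell^p(\Gamma)$-summable positive definite function, and with $\lambda_{y_k, y_k}(s) + \psi_k(s) \to \tilde\varphi_\alpha(s)$ for every $s \in \F_2 \cup \{e\}$, so in particular $\|y_k\|^2 + \psi_k(e) \to 1$. Restricting to $\F_2$, each $\psi_k|_{\F_2}$ is $\ell^p(\F_2)$-summable, so Okayasu's bound gives
\[ \|\psi_k|_{\F_2}\, \chi_j\|_{p, \F_2} \leq (j+1)\psi_k(e) \leq j+1 \]
for all $j, k$. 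The main step is to show $\|y_k\|^2 \to 0$. Because the generators of $\F_2$ are close to $e$ in $SL_n(\R)$, each sphere $W_j \subset \F_2$ lies in a small ambient neighborhood of the identity, so continuity of the matrix coefficient $\lambda_{y_k, y_k}$ on $SL_n(\R)$ gives $\lambda_{y_k, y_k}(s) \approx \|y_k\|^2$ throughout $W_j$; combining with $\psi_k(s) \approx \alpha^j - \|y_k\|^2$ on $W_j$ and the Okayasu estimate above yields
\[ (4 \cdot 3^{j-1})^{1/p} \bigl| \alpha^j - \|y_k\|^2 \bigr| \lesssim j+1, \]
and letting $j \to \infty$ forces $\|y_k\|^2 \to 0$.

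Once $\|y_k\|^2 \to 0$, the bound $|\lambda_{y_k, y_k}(s)| \leq \|y_k\|^2$ shows $\lambda_{y_k, y_k} \to 0$ uniformly on $\Gamma$, so $\psi_k \to \tilde\varphi_\alpha$ pointwise. In particular, $\varphi_\alpha = \tilde\varphi_\alpha|_{\F_2}$ is exhibited as a pointwise limit (after normalization by $\psi_k(e) \to 1$) of normalized $\ell^p(\F_2)$-summable positive definite functions, so it extends to a state on $C^*_{\ell^p}(\F_2)$, contradicting the choice of $\alpha$. The delicate point is the uniform approximation $\lambda_{y_k, y_k}(s) \approx \|y_k\|^2$ on $W_j$, since the modulus of continuity of $\lambda_{y, y}$ at $e$ depends on the vector $y$; this requires passing to a weakly convergent subsequence of $(y_k)$ in the unit ball of $L^2(SL_n(\R))$ and performing a diagonal argument along the filtration $W_1 \subset W_2 \subset \cdots$ to secure the approximation uniformly in $k$.
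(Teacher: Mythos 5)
Your overall plan (exhibit a positive definite witness that extends to a state on $C^*_{\ell^q}$ but not on the join) is sound in outline, but the central step fails. You fix the copy of $\F_2$ --- and hence the witness $\tilde\varphi_\alpha$ --- once and for all, and then assert that every sphere $W_j$ lies in a small ambient neighbourhood of the identity, so that $\lambda_{y_k,y_k}\approx\|y_k\|^2$ throughout $W_j$. This is false: with the generators fixed, say $\epsilon$-close to $e$, a word of length $j$ is only within distance roughly $j\epsilon$ of the identity, and since your copy of $\F_2$ is dense in $SL_n(\R)$, the spheres $W_j$ eventually leave every neighbourhood of $e$ and spread over the whole group. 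Worse, $\lambda_{y,y}\in C_0(SL_n(\R))$, so on high spheres the coefficient is typically close to $0$, not to $\|y_k\|^2$. The device you are borrowing from Proposition \ref{dense Lie} has the opposite quantifier order: there $k$ is fixed first and the free generators are then chosen, depending on $k$ and on the fixed continuous function, close enough to $e$ that the estimate holds on the finitely many words in $W_k$. Here you cannot re-choose the generators for each $j$, because the witness $\tilde\varphi_\alpha$ is attached to a fixed embedding of $\F_2$. Consequently the inequality $(4\cdot 3^{j-1})^{1/p}\,\bigl|\alpha^j-\|y_k\|^2\bigr|\lesssim j+1$ is unjustified, $\|y_k\|^2\to 0$ does not follow, and in fact you have not established the key claim at all, namely that $\tilde\varphi_\alpha$ fails to extend to a state on $C^*_\delta(SL_n(S))\vee C^*_{\ell^p}(SL_n(S))$: supporting the witness on a \emph{dense} free subgroup is exactly where the $\delta$-part is hardest to control, since restrictions of continuous coefficients to a dense subgroup need not be tame (that is the whole point of Proposition \ref{dense Lie}).

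The paper avoids this by testing on a discrete subgroup instead of a dense one: it restricts to $SL_n(\Z)\leq SL_n(\R)$ and uses Herz's restriction theorem, which says that coefficients of $\lambda_{SL_n(\R)}$ restrict to elements of the Fourier algebra $A(SL_n(\Z))$. Hence every state of $C^*_\delta(SL_n(S))$ restricts on $\C[SL_n(\Z)]$ to a positive functional on $C^*_r(SL_n(\Z))$, so on $\C[SL_n(\Z)]$ the norm of the join collapses to the $\ell^p$-norm, while the $\ell^q$-norm is strictly larger there by Okayasu's theorem (as $SL_n(\Z)$ contains a free group). If you want to rescue your argument, the witness should live on a discrete (lattice-like) subgroup where the $\lambda_{SL_n(\R)}$-coefficients become reduced-type coefficients, not on a dense free subgroup where they do not.
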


\begin{proof}
Suppose that $\varphi$ is a normalized positive definite function on $SL_n(S)$ which extends to a state on $C^*_\delta(SL_n(S))$. We will show that $\varphi|_{SL_n(\Z)}$ extends to a state on $C^*_r(SL_n(\Z))$.

Since $\varphi$ extends to a state on $C^*_\delta(SL_n(S))$. Then we can find a net $\{\varphi_i\}$ of sums of positive definite functions associated to $\lambda_{SL_n(\R)}$ which converge pointwise to $\varphi$. By Herz's restriction theorem \cite{herz}, $\varphi_i|_{SL_n(Z)}$ lies in $A(SL_n(\Z))$, the Fourier algebra of $SL_n(\Z)$ (see \cite{fa} for a reference on the Fourier algebra). Hence, as each $\varphi_i|_{SL_n(\Z)}$ is positive definite and $\varphi_i|_{SL_n(\Z)}$ converges pointwise to $\varphi|_{SL_n(\Z)}$, we conclude that $\varphi|_{SL_n(\Z)}$ extends to a positive linear functional on $C^*_r(SL_n(\Z))$.

This shows us that $\|x\|_\delta\leq \|x\|_{\ell^2}$ for every $x\in \C[SL_n(\Z)]$ since $\|x\|_\delta^2=\|x^*x\|_\delta=\sup\varphi(x^*x)$ where the supremum is taken over states $\varphi$ on $C^*_\delta(SL_n(S))$. Hence
$$\|x\|_{C^*_\delta(SL_n(S))\vee C^*_{\ell^p}(SL_n(S))}=\|x\|_{\ell^p}$$
for $x\in \C[SL_n(\Z)]$. As $\|\cdot\|_{\ell^q}$ is a larger norm on $\C[SL_n(\Z)]$ than $\|\cdot\|_{\ell^p}$, we conclude that $C^*_\delta(SL_n(S))\vee C^*_{\ell^p}(SL_n(S))\not\succeq C^*_{\ell^q}(SL_n(S))$.
\end{proof}

We note that this proposition adds to our list of examples of exotic group $C$*-algebras.

\begin{remark}
Suppose $G$ is a nonamenable group containing a discrete copy of the free group. A similar analysis shows that $C^*_{\lambda_G}(G_d)$ is not the full group $C$*-algebra. This justifies our comment in the introduction that $C^*_{\lambda_G}(G_d)$ leads to a large class of exotic group $C$*-algebras.
\end{remark}

\section{Exotic group $C$*-algebras of $SL_n(\Z)$}

Let $\mc F$ denote the set of finite dimensional representations on $SL_n(\Z)$. Notice that the natural homomorphisms from $SL_n(\Z)$ to $SL_n(\Z/N\Z)$ for $N\geq 1$ separate the points of $SL_n(\Z)$. Hence,  $C^*_{\mc F}(SL_n(\Z))$ is a group $C$*-algebra \cite[Proposition 1]{b}. Since $SL_n(\Z)$ is nonamenable, we have by Proposition \ref{amenable} that the left regular representation does not weakly contain any finite dimensional representations. Hence, $C^*_{\mc F}(SL_n(\Z))$ is strictly larger than the reduced $C$*-algebra.

In \cite{b}, Bekka demonstrates that the universal $C$*-algebra $C^*(SL_n(\Z))$ is not residually finite dimensional for $n\geq 3$, hence showing that $C^*_{\mc F}(SL_n(\Z))$ is an exotic $C$*-algebra for $n\geq 3$. Let $\mc F_0$ denote the set of finite dimensional representations which factor through a congruence subgroup of $SL_n(\Z)$ (this is the kernel $\Gamma(N)$ of the natural map from $SL_n(\Z)$ to $SL_n(\Z/N\Z)$). What Bekka actually showed was that $C^*_{\mc F_0}(SL_n(\Z))$ is not the full group $C$*-algebra for $n\geq 2$ and that $\mc F_0=\mc F$ for $n\geq 3$.

%

Our questions about the exotic group $C$*-algebra $C^*_{\mc F_0}(SL_n(\Z))$ are again similar to those asked in the previous two sections. Our first is how does $C^*_{\mc F_0}(SL_n(\Z))$ compare to $C^*_{\ell^p}(SL_n(\Z))$? We provide a partial answer to this question below.

\begin{prop}
Let $n\geq 2$. There exists a $p\in (2,\infty)$ so that $C^*_{\mc F_0}(SL_n(\Z))\not\succeq C^*_{\ell^p}(SL_n(\Z))$.
\end{prop}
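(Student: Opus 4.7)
The plan is to exhibit, for some $p\in(2,\infty)$, a positive definite function on $SL_n(\Z)$ that extends to a state on $C^*_{\ell^p}(SL_n(\Z))$ but not on $C^*_{\mc F_0}(SL_n(\Z))$. The natural candidates come from the Haagerup functions on a free subgroup: fix a discrete free subgroup $H\cong\F_2\le SL_n(\Z)$ (one exists for every $n\ge 2$, since $SL_2(\Z)$ is virtually free and embeds into $SL_n(\Z)$), and consider $\varphi_\alpha(s)=\alpha^{|s|}$ on $H$. By Remark \ref{3}, its extension $\tilde\varphi_\alpha$ by zero is positive definite on $SL_n(\Z)$, and $\tilde\varphi_\alpha\in\ell^p(SL_n(\Z))$ whenever $\alpha\le 3^{-1/p}$, so it extends to a state on $C^*_{\ell^p}(SL_n(\Z))$. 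Hence for every $\alpha\in(0,1)$ there is some $p\in(2,\infty)$ for which $\tilde\varphi_\alpha$ gives a state on $C^*_{\ell^p}(SL_n(\Z))$.

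I would then argue by contradiction: if the conclusion failed, then $C^*_{\mc F_0}(SL_n(\Z))\succeq C^*_{\ell^p}(SL_n(\Z))$ for every $p\in(2,\infty)$, so every such $\tilde\varphi_\alpha$ would extend to a state on $C^*_{\mc F_0}(SL_n(\Z))$. The corresponding GNS representation would be weakly contained in $\mc F_0$-representations, providing a net $(\pi_i,\xi_i)$ with each $\pi_i$ factoring through a finite quotient $SL_n(\Z/N_i\Z)$ and $\pi_i^{\xi_i,\xi_i}\to\tilde\varphi_\alpha$ pointwise. Each matrix coefficient $\pi_i^{\xi_i,\xi_i}$ is then $\Gamma(N_i)$-bi-invariant on $SL_n(\Z)$, while the target $\tilde\varphi_\alpha$ is supported on $H$ and strictly less than $1$ on $H\setminus\{e\}$; the goal is to turn this rigidity mismatch into a quantitative contradiction.

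The heart of the argument, and the main obstacle, would consist of two steps. First, I would exploit positive-definiteness of $\tilde\varphi_\alpha$ evaluated at tuples of representatives of distinct left cosets of $H$ in $SL_n(\Z)$—an abundant supply, since $H$ can be chosen of infinite index—to force the $\pi_i$-invariant component of $\xi_i$ to have asymptotically vanishing norm. Concretely, a $k\times k$ positive semidefinite matrix with diagonal $1-c$ and off-diagonal entries $-c$ requires $c\le 1/k$, so letting $k\to\infty$ drives the invariant mass to zero. Second, I would apply Kazhdan's property (T) for $n\ge 3$ (or Selberg's $3/16$ theorem, i.e.~property $(\tau)$ with respect to congruence subgroups, for $n=2$) to the non-invariant parts of the $\pi_i$: these yield a finite set $F\subset SL_n(\Z)$ and $\varepsilon>0$ such that $\max_{s\in F}(1-\mathrm{Re}\,\pi_i^{\xi_i,\xi_i}(s))\ge\varepsilon$ for large $i$, hence $\max_{s\in F}(1-\mathrm{Re}\,\tilde\varphi_\alpha(s))\ge\varepsilon$.

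The remaining delicate step, which I expect to be the most fragile part of the argument, is reconciling this spectral-gap estimate with the shape of $\tilde\varphi_\alpha$. Since the Kazhdan set $F$ is intrinsic to $SL_n(\Z)$ and need not lie in any particular $H$, one may have to conjugate $H$ (free subgroups are dense in the space of two-generator subgroups of $SL_n(\Z)$ in an appropriate sense) so that $F\cap H$ contains elements of bounded word length, and then take $\alpha$ close enough to $1$ to make $\tilde\varphi_\alpha$ exceed $1-\varepsilon$ on $F\cap H$ while still vanishing on $F\setminus H$—the latter forcing $\min_{s\in F}\mathrm{Re}\,\tilde\varphi_\alpha(s)$ to take the desired shape. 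Verifying that these choices can indeed be made simultaneously—thereby pinning down the explicit range of $p$ for which the proposition holds—constitutes the technical core of the argument.
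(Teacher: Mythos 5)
The decisive step in your plan fails: the spectral-gap estimate you invoke never contradicts anything about $\tilde\varphi_\alpha$. A Kazhdan (or property $(\tau)$) set $F$ must generate $SL_n(\Z)$, or at least a finite-index subgroup, so it cannot be contained in your free subgroup $H$, which you need to have infinite index for the coset computation in your first step. Consequently $\tilde\varphi_\alpha$ vanishes at some point of $F$, and the inequality $\max_{s\in F}(1-\mathrm{Re}\,\tilde\varphi_\alpha(s))\geq\varepsilon$ you derive is trivially satisfied rather than contradictory: spectral gap only forbids matrix coefficients that are uniformly close to $1$ on $F$, and your function, supported on an infinite-index subgroup and bounded by $\alpha^{|s|}<1$ there, is nowhere near almost-invariant on any generating set. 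Conjugating $H$ and pushing $\alpha\nearrow 1$ cannot repair this, because the obstruction is the nonempty set $F\setminus H$, not the size of $\tilde\varphi_\alpha$ on $F\cap H$. In effect you never prove the claim your whole argument hinges on---that $\tilde\varphi_\alpha$ fails to extend to a state on $C^*_{\mc F_0}(SL_n(\Z))$---and it is not clear this is even true: the GNS representation of $\tilde\varphi_\alpha$ is essentially $\mathrm{Ind}_H^{SL_n(\Z)}\pi_\alpha$ with $H$ of infinite index, and a property (T)/$(\tau)$ argument has no purchase on it precisely because it has no almost-invariant vectors to detect.

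The paper's proof supplies the missing ingredient by arranging for the $\ell^p$-representation to have almost-invariant vectors. It first reduces to $n=2$ (restrictions to $SL_2(\Z)$ of representations factoring through congruence subgroups of $SL_n(\Z)$ again factor through congruence subgroups), then takes a \emph{finite-index} free subgroup $\F_2\leq SL_2(\Z)$ and induces the Haagerup GNS representations: $\mathrm{Ind}_{\F_2}^{SL_2(\Z)}\pi_\alpha$ is still an $\ell^p$-representation by Theorem \ref{induced}, and as $\alpha\nearrow 1$ it converges in the Fell topology to $\mathrm{Ind}_{\F_2}^{SL_2(\Z)}1_{\F_2}$, which contains $1_{SL_2(\Z)}$ because the index is finite. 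For suitable $\alpha$ one therefore has an $\ell^p$-representation with an $(\epsilon,S)$-invariant unit vector, where $\epsilon$ is the constant in Bekka's lemma isolating the trivial representation among congruence representations; since $\ell^p$-representations do not weakly contain the trivial representation, this representation cannot be weakly contained in $\mc F_0$, which is exactly the contradiction your extension-by-zero construction cannot produce. If you wish to salvage your approach, replace the trivial extension of $\varphi_\alpha$ by the representation induced from a finite-index copy of $\F_2$; your infinite-index coset argument then becomes unnecessary.
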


\begin{proof}
Note that if $\pi$ is a representation of $SL_n(\Z)$ which factors through a congruence subgroup, then the restriction of $\pi$ to $SL_2(\Z)$ factors through a congruence subgroup of $SL_2(\Z)$. Hence, it suffices to consider the case when $n=2$.

Note that for each $\alpha\in (0,1)$, the positive definite function $\varphi_\alpha\fn \F_2\to \C$ defined by $\phi_\alpha(s)=\alpha^{-|s|}$ lies in $\ell^p(\F_2)$ for some $p$. Let $\pi_\alpha$ denote the GNS representation of $\varphi_\alpha$. Then, since $\varphi_\alpha$ converges pointwise to the trivial representation as $\alpha\nearrow 1$, $\pi_\alpha$ converges to $1_{\F_2}$ in the Fell topology.

Let $\F_2\subset SL_2(\Z)$ be a finite index embedding of the free group in $SL_2(\Z)$ \cite{serre}. Then $\mathrm{Ind}_{\mathbb F_2}^{SL_2(\Z)}\pi_\alpha$ converges to $\mathrm{Ind}_{\F_2}^{SL_2(\Z)}1_{\F_2}$. Note that $\mathrm{Ind}_{\F_2}^{SL_2(\Z)}1_{\F_2}$ contains a copy of $1_{SL_2(\Z)}$ as a subrepresentation since $\F_2$ is of finite index in $SL_2(\Z)$. Hence, $\mathrm{Ind}_{\mathbb F_2}^{SL_2(\Z)}\pi_\alpha\to 1_{SL_2(\Z)}$ in the Fell topology.

Fix a finite generating set $S$ for $SL_2(\Z)$. In \cite[Lemma 3]{b} Bekka shows that the trivial representation is isolated among the set of restrictions $\pi|_{SL_2(\Z)}$ where $\pi$ is a representation which factors through a congruence subgroup. This is to say that there exists $\epsilon>0$ so that if $\pi\fn SL_2(\Z)\to B(\Hi)$ is a representation which factors through a congruence subgroup with the property that there exists $x\in \Hi$ so that $\|\pi(s)x-x\|<\epsilon$ for every $s\in S$, then $\pi$ contains the trivial representation as a subrepresentation.

Since $\mathrm{Ind}_{\F_2}^{SL_2(\Z)}\pi_\alpha\to 1_{SL_2(\Z)}$ in the Fell topology, we can find $\alpha$ and a unit vector $x$ in the corresponding Hilbert space so that $\|\pi_\alpha(s)x-x\|<\epsilon$. By Theorem \ref{induced}, $\mathrm{Ind}_{\F_2}^{SL_2(\R)}\pi_\alpha$ is an $\ell^p$-representation for some $p$ and, hence, does not weakly contain a copy of the trivial representation. Therefore $\pi_\alpha$ is not weakly contained in $\mc F_0$.
\end{proof}

Note that since $C^*_{\ell^q}(SL_n(\Z))\succeq C^*_{\ell^p}(SL_n(\Z))$ when $q>p$, the proposition provides the same conclusion for all $q>p$. This answer to our question is not as clean as that provided in the previous section and we are still left with questions. Does the conclusion of the proposition hold for any $p>2$? If not, can we provide nontrivial estimates on the values of $p$ which provide the conclusion of the proposition?

We conclude this paper by showing that $C^*_{\mc F_0}(\Gamma)\vee C^*_{\ell^p}(\Gamma)$ forms a class of exotic group $C$*-algebras. This proposition provides a similar conclusion as the last:

\begin{proposition}
Let $n\geq 2$. For every $p\in [1,\infty)$, there exists $q>p$ so that $C^*_{\mc F_0}(SL_n(\Z))\vee C^*_{\ell^p}(SL_n(\Z))\not\succeq C^*_{\ell^q}(\Gamma)$.
\end{proposition}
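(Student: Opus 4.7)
The proof closely parallels that of the preceding Proposition, modified to accommodate the join with $C^*_{\ell^p}$. As there, we reduce to $n=2$: restrictions of congruence representations of $SL_n(\Z)$ to $SL_2(\Z)$ are again congruence representations, restrictions of $\ell^p$-representations are $\ell^p$-representations, and by Remark~\ref{3}, $\ell^q$-positive definite functions on $SL_2(\Z)$ extend to $\ell^q$-positive definite functions on $SL_n(\Z)$; thus an inequality $\|\cdot\|_{C^*_{\mc F_0}\vee C^*_{\ell^p}}\geq\|\cdot\|_{\ell^q}$ on $\C[SL_n(\Z)]$ would force the analogous inequality on $\C[SL_2(\Z)]$. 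The candidate representation is $\tilde\pi_\alpha:=\mathrm{Ind}_{\F_2}^{SL_2(\Z)}\pi_\alpha$, where $\F_2\hookrightarrow SL_2(\Z)$ is the finite-index embedding of \cite{serre} and $\pi_\alpha$ is the GNS representation of $\varphi_\alpha(s):=\alpha^{|s|}$ on $\F_2$. As in the preceding Proposition, $\tilde\pi_\alpha\to 1_{SL_2(\Z)}$ in the Fell topology as $\alpha\nearrow 1$, and by Theorem~\ref{induced}, $\tilde\pi_\alpha$ is an $\ell^q$-representation whenever $\alpha\leq 3^{-1/q}$.

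The key input is that the trivial representation of $SL_2(\Z)$ is isolated in the support of $C^*_{\mc F_0}\vee C^*_{\ell^p}$. Bekka's Lemma~3 in \cite{b} gives a finite $S_1\subset SL_2(\Z)$ and $\epsilon_1>0$ such that any representation in $\mc F_0$ with an $(S_1,\epsilon_1)$-invariant unit vector contains the trivial subrepresentation; and nonamenability of $SL_2(\Z)$ combined with \cite[Proposition 2.12]{bg} (referenced in the discussion following Proposition~\ref{1}) shows that $1_{SL_2(\Z)}$ is not weakly contained in the $\ell^p$-representations, yielding a finite $S_2\subset SL_2(\Z)$ and $\epsilon_2>0$ such that no $\ell^p$-representation admits an $(S_2,\epsilon_2)$-invariant unit vector. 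Setting $S:=S_1\cup S_2$ and $\epsilon:=\min(\epsilon_1,\epsilon_2)$, every nontrivial pure state $\omega_\rho$ of $C^*_{\mc F_0}\vee C^*_{\ell^p}$ satisfies the uniform lower bound $\sum_{s\in S}(1-\Re\omega_\rho(s))\geq\epsilon^2/2$. Now choose $\delta<\epsilon/\sqrt{2|S|}$, pick $\alpha$ close enough to $1$ that $\tilde\pi_\alpha$ admits an $(S,\delta)$-invariant unit vector $x$, and choose $q>p$ with $\alpha\leq 3^{-1/q}$.

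Suppose for contradiction that $\tilde\pi_\alpha$ is weakly contained in $\mc F_0\cup\ell^p$-representations. Then $\omega:=(\tilde\pi_\alpha)_{x,x}$ extends to a state on $C^*_{\mc F_0}\vee C^*_{\ell^p}$, and is a pointwise limit of finite convex combinations $\sum_j\lambda_j^{(n)}\omega_{\rho_j^{(n)}}$ of pure vector states from $\mc F_0$- and $\ell^p$-representations. Near-invariance of $x$ gives $\sum_{s\in S}(1-\Re\omega(s))<|S|\delta^2/2$, while the uniform lower bound $\epsilon^2/2$ applies to each nontrivial $\omega_{\rho_j^{(n)}}$; the fraction of mass carried by nontrivial pure states in each approximating convex combination is therefore at most $|S|\delta^2/\epsilon^2<1/2$. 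A weak-$*$ compactness argument yields a decomposition $\omega=\lambda\cdot 1+(1-\lambda)\omega'$ with $\lambda\geq 1/2$ and $\omega'$ a state, so via GNS the cyclic subrepresentation of $\tilde\pi_\alpha$ generated by $x$ contains $1_{SL_2(\Z)}$ as a subrepresentation. But then $1_{SL_2(\Z)}\preceq\tilde\pi_\alpha\preceq C^*_{\ell^q}(SL_2(\Z))$ would force $C^*_{\ell^q}=C^*$ on the nonamenable group $SL_2(\Z)$, a contradiction. Hence $\|\cdot\|_{\tilde\pi_\alpha}\not\leq\|\cdot\|_{C^*_{\mc F_0}\vee C^*_{\ell^p}}$; combined with $\|\cdot\|_{\tilde\pi_\alpha}\leq\|\cdot\|_{\ell^q}$, this gives $C^*_{\mc F_0}\vee C^*_{\ell^p}\not\succeq C^*_{\ell^q}$ on $SL_2(\Z)$, and hence on $SL_n(\Z)$. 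The most delicate step is the transition from a ``$\geq 1/2$ trivial mass'' structure in the approximating convex combinations to an honest trivial summand of $\omega$, and hence a trivial subrepresentation of $\tilde\pi_\alpha$; the weak-$*$ compactness together with the fact that GNS commutes with convex decompositions of states is the cleanest route.
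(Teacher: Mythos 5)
Your proposal follows essentially the same route as the paper: reduce to $n=2$, take $\tilde\pi_\alpha=\mathrm{Ind}_{\F_2}^{SL_2(\Z)}\pi_\alpha$, which by Theorem~\ref{induced} is an $\ell^q$-representation converging to $1_{SL_2(\Z)}$ in the Fell topology, and play its almost-invariant vector against the fact that among the representations attached to $\mc F_0$ and to $\ell^p$ only those containing a trivial subrepresentation can admit $(S,\epsilon)$-invariant unit vectors. Your convexity/weak-$*$ compactness argument spells out the final step that the paper leaves terse, and your closing contradiction ($1\leq\tilde\pi_\alpha$ would force $C^*_{\ell^q}=C^*$ for the nonamenable group $SL_2(\Z)$) is correct.

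One step is not justified as you state it: the uniform bound $\sum_{s\in S}(1-\Re\omega_\rho(s))\geq\epsilon^2/2$ for \emph{every} nontrivial pure state of $C^*_{\mc F_0}\vee C^*_{\ell^p}$. A pure state of the join factors through $C^*_{\mc F_0}$ or through $C^*_{\ell^p}$, hence comes from an irreducible representation that is only \emph{weakly contained} in $\mc F_0$ (respectively, in the $\ell^p$-representations). On the $\mc F_0$ side such a representation need not factor through any congruence subgroup, so Bekka's Lemma~3 does not apply to it directly; on the $\ell^p$ side there is no issue, since the $(S_2,\epsilon_2)$-bound holds for all states of $C^*_{\ell^p}$ and is preserved under convex combinations and pointwise limits. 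The repair uses nothing new: by the definition of the norm on the join, $\omega=(\tilde\pi_\alpha)_{x,x}$ is already a pointwise limit of convex combinations of vector states of \emph{actual} representations in $\mc F_0$ and of $\ell^p$-representations; decompose each (finite-dimensional) congruence representation into its trivial and nontrivial irreducible constituents, apply Bekka's lemma to the latter and the $\epsilon_2$-bound to the $\ell^p$ summands, and run your mass/compactness argument verbatim. This is in effect what the paper does by working directly with $\pi_p\oplus\sigma$ for $\sigma\in\mc F_0$ without trivial subrepresentations, rather than with pure states of the completed join. Two minor points: membership $\varphi_\alpha\in\ell^q(\F_2)$ requires the strict inequality $\alpha<3^{-1/q}$ (harmless, since $3^{-1/q}\to 1$ as $q\to\infty$), and your phrase ``pure vector states from $\ell^p$-representations'' should just read ``vector states,'' as $\ell^p$-representations need not decompose into irreducible $\ell^p$-representations; the bound you need holds for all of their vector states anyway.
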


\begin{proof}
Again it suffices to consider the case when $n=2$.

Let $S$ and $\epsilon$ be as in the previous proposition. Take $\pi_p\fn SL_n(\Z)\to B(\Hi_p)$ to be a faithful representation of $C^*_{\ell^p}(SL_2(\Z))$. Then, since the $\ell^p$-representations do not weakly contain the trivial representation, there exists $\epsilon'>0$ so that whenever $x\in \Hi_p$ is a unit vector, there exists $s\in S$ so that $\|\pi_p(s)x-x\|\geq \epsilon'$.

Suppose that $\sigma\fn SL_n(\Z)\to B(\Hi_\sigma)$ in $\mc F_0$ does not contain a copy of the trivial representation. Take $(x,y)\in\Hi_p\oplus \Hi_\sigma$ to be a unit vector. If $\|x\|\geq 1/\sqrt{2}$, there exists $s\in S$ so that $\|\pi_p(s)x-x\|\geq\epsilon'/\sqrt 2$ which implies that $\|(\pi_p\oplus \sigma)(s)(x,y)-(x,y)\|\geq \epsilon'/\sqrt{2}$. Similarly, if $\|y\|\geq 1/\sqrt{2}$, $\|(\pi_p\otimes \sigma)(s)(x,y)-(x,y)\|\geq \epsilon/\sqrt{2}$. Hence, $\|(\pi_p\oplus \sigma)(s)(x,y)-(x,y)\|\geq \min\{\epsilon,\epsilon'\}/\sqrt 2$.

Since we have that $\pi_q\to 1$ in the Fell topology as $q\to\infty$, we can find $q$ so that there exists a unit vector $x\in \Hi_q$ with $\|\pi_q(s)x-x\|<\min\{\epsilon,\epsilon'\}/\sqrt 2$ for every $s\in S$. As $\pi_q$ does not weakly contain the trivial representation, we conclude that $\pi_q$ is not weakly contained in $\{\pi_p\}\cup \mc F_0$.
\end{proof}

\section*{Acknowledgements}
The author would like to thank his advisor Nico Spronk for suggesting this project. The author also wishes to thank both his advisors, Brian Forrest and Nico Spronk, for many useful discussions and suggestions. The author was supported by an NSERC Postgraduate Scholarship.

\bibliographystyle{amsplain}
\bibliography{exotic}

\end{document}